\numberwithin{equation}{section}
\numberwithin{figure}{section}
\theoremstyle{plain}
\newtheorem{thm}{\protect\theoremname}
  \theoremstyle{plain}
  \newtheorem{conjecture}[thm]{\protect\conjecturename}
  \theoremstyle{plain}
  \newtheorem{lem}[thm]{\protect\lemmaname}
  \theoremstyle{remark}
  \newtheorem{rem}[thm]{\protect\remarkname}
  \theoremstyle{remark}
  \newtheorem{claim}[thm]{\protect\claimname}
  \theoremstyle{plain}
  \newtheorem{prop}[thm]{\protect\propositionname}
 \author{Antonio Córdoba$^1$}
\thanks{1- The authors are partially supported by the grant
MTM2014-56350-P from the Ministerio de Ciencia e Innovaci\'{o}n (Spain).}
\address{Instituto de Ciencias Matemáticas CSIC-UAM-UC3M-UCM- Departamento de Matemáticas (Universidad Autónoma de Madrid), 28049 MAdrid, Spain}
       \email{antonio.cordoba@uam.es}
       \author{Eric Latorre$^1$}
       \address{Instituto de Ciencias Matemáticas CSIC-UAM-UC3M-UCM- Departamento de Matemáticas (Universidad Autónoma de Madrid), 28049 MAdrid, Spain}
       \email{eric.latorre@icmat.es}
  \providecommand{\claimname}{Claim}
  \providecommand{\conjecturename}{Conjecture}
  \providecommand{\lemmaname}{Lemma}
  \providecommand{\propositionname}{Proposition}
  \providecommand{\remarkname}{Remark}
\providecommand{\theoremname}{Theorem}
\begin{document}

\title[Radial Multipliers and Restriction in mixed norm spaces]{Radial Multipliers and restriction to surfaces of the Fourier transform
in mixed-norm spaces}
\begin{abstract}
In this article we revisit some classical conjectures in harmonic
analysis in the setting of mixed norm spaces $L_{rad}^{p}L_{ang}^{2}\left(\mathbb{R}^{n}\right)$.
We produce sharp bounds for the restriction of the Fourier transform
to compact hypersurfaces of revolution in the mixed norm setting and
study an extension of the disc multiplier. We also present some results
for the discrete restriction conjecture and state an intriguing open
problem.
\end{abstract}
\maketitle

\section{Introduction}

The well-known restriction conjecture, first proposed by E. M. Stein,
asserts that the restriction of the Fourier transform of a given integrable
function $f$ to the unit sphere, $\hat{f}|_{S^{n-1}}$, yields a
bounded operator from $L^{p}\left(\mathbb{R}^{n}\right)$, $n\geq2$,
to $L^{q}\left(S^{n-1}\right)$ so long as 
\[
\begin{tabular}{c}
 \end{tabular}1\leq p<\frac{2n}{n+1},\ \frac{1}{q}\geq\frac{n+1}{n-1}\left(1-\frac{1}{p}\right).
\]
This conjecture has been fully proved only in dimension $n=2$ by
C. Fefferman \cite{key-1} (see also \cite{key-2} for an alternative
geometrical proof). In higher dimensions, the best known result is
the particular case $q=2$ and $1\leq p\leq\frac{2\left(n+1\right)}{n+3}$,
which proof was obtained independently by P. Tomas and E. M. Stein
\cite{key-3}. 

The periodic analogue, i.e. for Fourier series, was observed by A.
Zygmund \cite{key-11},\textbf{ }but also in two dimensions. It asserts
that for any trigonometric polynomial 
\[
P\left(x\right)=\sum_{\left|\nu\right|=R}a_{\nu}e^{2\pi i\nu\cdot x},\ \nu\in\mathbb{Z}^{2},
\]
the following inequality holds:
\[
\left\Vert P\right\Vert _{L^{4}\left(Q\right)}\lesssim\left\Vert P\right\Vert _{L^{2}\left(Q\right)},
\]
uniformly on $R>0$ and where $Q$ is any unit square in the plane. 

The alternative proof given in \cite{key-2} allows us to connect
both the periodic and the nonperiodic restriction theorems, explaining
the reason for the apparently different numerologies of the corresponding
$\left(p,q\right)$ exponent ranges. It also raises an interesting
question about the location of lattice points in small arcs of circles
\cite{key-4}.

The first result in this paper goes further in that direction: Given
$\left\{ \xi_{j}\right\} $ a finite set of points in the circle $\left\{ \left\Vert \xi\right\Vert =R\right\} $
of the plane, let us consider 
\[
M:=\sup_{j}\#\left\{ \xi_{k},\ \left\Vert \xi_{k}-\xi_{j}\right\Vert \leq R^{\frac{1}{2}}\right\} .
\]
We have:
\begin{thm}
\label{thm:Discrete}The following inequality holds
\begin{equation}
\sup_{\mu\left(Q\right)=1}\left[\int_{Q}\left|\sum a_{k}e^{2\pi i\xi_{k}\cdot x}\right|^{4}d\mu\left(x\right)\right]^{\frac{1}{4}}\lesssim M^{\frac{1}{2}}\left(\sum\left|a_{k}\right|^{2}\right)^{\frac{1}{2}},\label{eq:Discrete}
\end{equation}
where the suprermum is taken over all unit squares of $\mathbb{R}^{2}$
and $\mu$ corresponds to the Lebesgue measure. 
\end{thm}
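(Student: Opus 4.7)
The plan is to adapt to this discrete setting the alternative proof of the two-dimensional restriction theorem given in \cite{key-2}, exploiting the curvature scale $R^{1/2}$ of the circle. First I would replace $\chi_{Q}$ by a Schwartz majorant $\psi\geq\chi_{Q}$, uniformly in $Q$, whose Fourier transform $\hat{\psi}$ is concentrated in (say) the unit ball, so that $\int_{Q}|f|^{4}\leq\int\psi|f|^{4}$ and the latter admits a clean quadrilinear expansion weighted by values of $\hat{\psi}$ at the frequency combinations $\xi_{j}+\xi_{l}-\xi_{k}-\xi_{m}$, where $f(x)=\sum a_{k}e^{2\pi i\xi_{k}\cdot x}$.

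Next, I would partition $\{|\xi|=R\}$ into arcs $I_{s}$ of length comparable to $R^{1/2}$, which is precisely the scale at which the curvature of the circle becomes appreciable relative to the unit-square localization. Since each arc contains at most $M$ of the frequencies $\xi_{k}$ by hypothesis, the partial sums $f_{s}(x)=\sum_{\xi_{k}\in I_{s}}a_{k}e^{2\pi i\xi_{k}\cdot x}$ satisfy the pointwise Cauchy-Schwarz estimate $|f_{s}(x)|^{2}\leq M\sum_{\xi_{k}\in I_{s}}|a_{k}|^{2}$, so that $\sum_{s}|f_{s}(x)|^{2}\leq M\sum_{k}|a_{k}|^{2}$ for every $x$.

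Expanding $|f|^{4}=\sum_{s,t,u,v}f_{s}\overline{f_{t}}f_{u}\overline{f_{v}}$ and integrating against $\psi$, the main obstacle is to establish a C\'ordoba-type orthogonality, namely that the only arc-quadruples producing a non-negligible contribution are those with $\{s,u\}=\{t,v\}$. This is the discrete counterpart of the geometric fact underlying \cite{key-2}: the arc centers lie on a circle of radius $R$ and are spaced $\sim R^{1/2}$ apart, so an approximate relation $|\xi_{s}+\xi_{u}-\xi_{t}-\xi_{v}|\lesssim R^{1/2}$ forces the two unordered pairs to coincide, by the chord-midpoint argument that underlies Zygmund's exact $L^{4}$ inequality; the remaining off-pair contributions are absorbed by the rapid decay of $\hat{\psi}$ beyond the unit scale.

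Once this orthogonality is in place, the quadruple sum collapses (up to a harmless combinatorial factor) to $\int\psi\bigl(\sum_{s}|f_{s}|^{2}\bigr)^{2}$, and inserting the pointwise bound from the second paragraph yields $\int\psi|f|^{4}\lesssim M^{2}\bigl(\sum_{k}|a_{k}|^{2}\bigr)^{2}$. Extracting the fourth root and taking the supremum over unit squares then delivers the estimate \eqref{eq:Discrete}.
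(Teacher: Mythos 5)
Your strategy is viable and the exponents come out right, but it is genuinely different from the paper's argument, and one step of your outline is false as literally stated. The paper does not decompose into arcs at all: it first reduces to $M=1$ (i.e.\ to an $R^{1/2}$-separated family, by splitting $\{\xi_k\}$ into at most $M$ separated subfamilies and using the triangle and Cauchy--Schwarz inequalities), then replaces the exponential sum by $\hat f$ where $f(\xi)=\sum_k a_k\varphi(\xi+\xi_k)e^{2\pi i\xi\cdot q}$ for a fixed bump $\varphi$, so that $|\hat f|$ dominates the exponential sum on $Q$, and finally computes $\int|\hat f|^4=\int|f\ast f|^2$ on the frequency side using the almost-disjointness of the supports of $\varphi_k\ast\varphi_j$ (balls of radius $O(1)$ centred at $-(\xi_k+\xi_j)$). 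Your route --- a Schwartz majorant of $\chi_Q$, arcs of length $R^{1/2}$ each holding at most $M$ points, and C\'ordoba-type biorthogonality of the pair sums --- is the extension-side version of the same computation; what it buys is that you never need the reduction to $M=1$, since Cauchy--Schwarz inside each arc produces the factor $M$ directly, and the final chain $\int\psi|f|^4\lesssim\int\psi(\sum_s|f_s|^2)^2\leq CM^2(\sum|a_k|^2)^2$ does give the stated $M^{1/2}$.

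The gap: the claim that $|\xi_s+\xi_u-\xi_t-\xi_v|\lesssim R^{1/2}$ forces $\{s,u\}=\{t,v\}$ (even up to $O(1)$ neighbours) fails for near-antipodal configurations. If $\xi_u\approx-\xi_s$ and $\xi_v\approx-\xi_t$, both pair sums are $O(R^{1/2})$, hence close to each other, while $s$ and $t$ are unrelated; equivalently, all the sets $I_s+I_{s^{*}}$ ($I_{s^{*}}$ the antipodal arc) pile up on $B(0,CR^{1/2})$, so the overlap of the pair sums is of order $R^{1/2}$ there, not $O(1)$. The chord--midpoint map degenerates as $|\xi+\eta|\to0$, exactly as Zygmund's representation count $r(m)\leq2$ has the exception $m=0$. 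The repair is standard and cheap: treat the near-antipodal quadruples separately, observing that their total contribution is $\int\psi\,|\sum_s f_sf_{s^{*}}|^2\leq\int\psi(\sum_s|f_s|^2)^2$ by Cauchy--Schwarz, i.e.\ the same quantity you already control. You should also justify the ``absorbed by rapid decay'' step: summing the off-pair terms needs a polynomial count of arc quadruples at each dyadic distance $2^m$ against the $O(2^{-Nm})$ decay of $\hat\psi$, or, more cleanly, a majorant $\psi=C|\phi|^2$ with $\hat\phi$ supported in $B(0,1/2)$, which makes those terms vanish identically. For what it is worth, the paper's own assertion of ``finite overlapping'' of the supports of $\varphi_k\ast\varphi_j$ has exactly the same antipodal exception and admits the same repair.
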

The corresponding result in higher dimensions ($n\geq3$) is an interesting
open problem: 
\begin{conjecture}
Let $\left\{ \xi_{j}\right\} \subset S_{R}^{n-1}$ and $M:=\sup_{j}\#\left\{ \xi_{k},\ \left\Vert \xi_{k}-\xi_{j}\right\Vert \leq R^{\frac{1}{2}}\right\} ,$
is it true that 
\begin{equation}
\sup_{\mu\left(Q\right)=1}\left[\int_{Q}\left|\sum a_{k}e^{2\pi i\xi_{k}\cdot x}\right|^{\frac{2n}{n-1}}d\mu\left(x\right)\right]^{\frac{n-1}{2n}}\lesssim M^{\frac{1}{2}}\left(\sum\left|a_{k}\right|^{2}\right)^{\frac{1}{2}}.
\end{equation}

\end{conjecture}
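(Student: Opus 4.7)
The plan is to rescale onto the unit sphere and then apply a sharp decoupling/restriction tool calibrated to the $R^{1/2}$--clustering hypothesis. Setting $\xi_{k}=R\eta_{k}$ places all frequencies on $S^{n-1}$ with the separation condition becoming $\|\eta_{k}-\eta_{j}\|\le R^{-1/2}$, and the change of variables $x=y/R$ shows that the conjectured inequality is equivalent to
\[
\Bigl\|\sum_{k}a_{k}e^{2\pi i\eta_{k}\cdot y}\Bigr\|_{L^{2n/(n-1)}(RQ)}\lesssim R^{(n-1)/2}M^{1/2}\Bigl(\sum_{k}|a_{k}|^{2}\Bigr)^{1/2},
\]
uniformly over cubes $RQ$ of side $R$. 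The power $R^{(n-1)/2}$ is exactly the critical exponent dictated by dimensional analysis at the endpoint of the restriction problem.

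The natural tool is the Bourgain--Demeter $\ell^{2}$-decoupling theorem for $S^{n-1}$. Cover the sphere by spherical caps $\tau$ of diameter $R^{-1/2}$, set $g_{\tau}:=\sum_{\eta_{k}\in\tau}a_{k}e^{2\pi i\eta_{k}\cdot y}$, and apply decoupling at the Stein--Tomas exponent $p_{\ast}:=2(n+1)/(n-1)$. By the clustering hypothesis each cap carries at most $M$ frequencies, so Cauchy--Schwarz yields the trivial pointwise bound $|g_{\tau}(y)|\le M^{1/2}(\sum_{\eta_{k}\in\tau}|a_{k}|^{2})^{1/2}$, whence $\|g_{\tau}\|_{L^{p_{\ast}}(B_{R})}\lesssim M^{1/2}R^{n/p_{\ast}}(\sum_{\eta_{k}\in\tau}|a_{k}|^{2})^{1/2}$. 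Squaring and summing over $\tau$ collapses the frequency content to $M\sum_{k}|a_{k}|^{2}$, and a H\"older step on $B_{R}$ that pulls the exponent from $p_{\ast}$ down to $2n/(n-1)$ costs a factor $R^{(n-1)/(2(n+1))}$. The arithmetic produces exactly $R^{(n-1)/2+\epsilon}M^{1/2}(\sum|a_{k}|^{2})^{1/2}$, matching the target up to the $R^{\epsilon}$ loss intrinsic to decoupling.

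The hard part, therefore, is to remove this $R^{\epsilon}$ factor. The exponent $2n/(n-1)$ is precisely the endpoint of the (still open) full extension conjecture for $S^{n-1}$, so an $\epsilon$-free proof appears to require restriction-type information strictly sharper than what decoupling currently provides. A natural alternative is the $TT^{\ast}$ reformulation, which turns the desired inequality into
\[
\Bigl\|\sum_{k}\widehat{f\chi_{Q}}(\xi_{k})\,e^{2\pi i\xi_{k}\cdot x}\Bigr\|_{L^{2n/(n-1)}(Q)}\lesssim M\,\|f\|_{L^{2n/(n+1)}(Q)},
\]
whose convolution kernel $K(x)=\sum_{k}e^{2\pi i\xi_{k}\cdot x}$ is an $M$--weighted discrete model of $R^{n-1}\widehat{d\sigma_{S^{n-1}}}(Rx)$; a stationary-phase analysis of $K$ without any logarithmic loss seems to demand the very restriction estimate one is trying to discretise. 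In contrast with Theorem~\ref{thm:Discrete}, where the even exponent $4$ admits C\'ordoba's midpoint/chord pairing, for $n\ge3$ the exponent $2n/(n-1)$ is no longer an even integer and no analogous combinatorial identity is available. I therefore expect any unconditional proof to require genuinely new geometric input on the configuration of the $\xi_{k}$ at the scale $R^{1/2}$, beyond what either decoupling or Stein--Tomas presently supply.
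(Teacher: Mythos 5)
You should note first that the paper contains no proof of this statement: it is presented as a conjecture and explicitly described as ``an interesting open problem'' for $n\geq3$, so there is no proof of record to compare yours against, and you are right not to claim one. As a partial-progress analysis, your proposal is correct. The rescaling $\xi_{k}=R\eta_{k}$, $x=y/R$ does reduce the conjecture to $\bigl\Vert \sum_{k}a_{k}e^{2\pi i\eta_{k}\cdot y}\bigr\Vert _{L^{2n/(n-1)}(RQ)}\lesssim R^{(n-1)/2}M^{1/2}\bigl(\sum_{k}\left|a_{k}\right|^{2}\bigr)^{1/2}$ with the $\eta_{k}$ on $S^{n-1}$ and at most $M$ points per $R^{-1/2}$-cap, and your exponent bookkeeping checks out: Cauchy--Schwarz on each cap, Bourgain--Demeter $\ell^{2}$-decoupling at $p_{*}=2(n+1)/(n-1)$, and H\"older down to $2n/(n-1)$ give $(n-1)/(2(n+1))+n(n-1)/(2(n+1))=(n-1)/2$, i.e.\ the conjectured inequality with an $R^{\epsilon}$ loss. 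One small caveat: state the decoupling theorem in its weighted form for the $R^{-1}$-neighbourhood of the sphere; frequencies lying exactly on $S^{n-1}$ are then handled by the standard discrete-restriction corollary, so this is cosmetic rather than a gap.

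Your diagnosis of why the $\epsilon$ cannot currently be removed is also consistent with how the paper itself separates the two cases. The planar result (Theorem \ref{thm:Discrete}) is proved there by precisely the mechanism you identify as missing in higher dimensions: since the exponent $4$ is even, the identity $\int\left|\hat{f}\right|^{4}=\int\left|f\ast f\right|^{2}$ converts the estimate into a finite-overlap count for the supports of the translated bumps $\varphi_{k}\ast\varphi_{j}$ --- pure $L^{2}$ orthogonality, with no stationary phase and no logarithmic or $R^{\epsilon}$ loss. For $n\geq3$ the exponent $2n/(n-1)$ is not an even integer (for $n=3$ it equals $3$) and sits at the endpoint of the full restriction conjecture, so no convolution identity of this kind is available, and your $TT^{\ast}$ reformulation correctly locates the difficulty in an $\epsilon$-free kernel estimate that is essentially equivalent to endpoint restriction-type information. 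The only correction to make is presentational: be explicit that what your argument establishes is the conjecture weakened by a factor $R^{\epsilon}$, and that the statement as posed remains open, in agreement with the paper.
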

Although there are many interesting publications by several authors
throwing some light on the restriction conjecture, its proof remains
open in dimension $n\geq3$. One of the more remarkable improvements
was B. Barcelo's thesis \cite{key-5}. He proved that Fefferman's
result also holds for the cone in $\mathbb{R}^{3}$. Another interesting
result was given by L. Vega in his Ph.D. thesis \cite{key-12},\textbf{
}where he obtained the best result in the Stein-Tomas restriction
inequality when the space $L^{p}\left(\mathbb{R}^{n}\right)$ is replaced
by $L_{rad}^{p}L_{ang}^{2}\left(\mathbb{R}^{n}\right).$

Here we shall consider the restriction of the Fourier transform to
other surfaces of revolution in these mixed norm spaces. Several special
cases have already been treated \cite{key-6,key-7} but we present
a more general and unified proof for ``all'' compact surfaces of
revolution:
\[
\Gamma=\left\{ \left(g\left(z\right),\theta,z\right)\in\mathbb{R}^{n+1},\ \theta\in S^{n-1},\ a\leq z\leq b,\ 0\leq g\in C^{1}\left(a,b\right)\right\} .
\]
That is, in $\mathbb{R}^{n+1}$, $n\geq2$, we consider cylindrical
coordinates $\left(r,\theta,z\right)$ where the first components
$\left(r,\theta\right)$ correspond to the standard polar coordinates
in $\mathbb{R}^{n}$; $0<r<\infty,\ \theta\in S^{n-1}$, and $z\in\mathbb{R}$
denotes the zenithal coordinate. In this coordinate system, the $L_{rad}^{p}L_{zen}^{2}L_{ang}^{2}\left(\mathbb{R}^{n+1}\right)$
norm is given by 
\[
\left(\int_{0}^{\infty}r^{n-1}\left(\int_{-\infty}^{\infty}\int_{S^{n-1}}\left|f\left(r,\theta,z\right)\right|^{2}d\theta dz\right)^{\frac{p}{2}}dr\right)^{\frac{1}{p}}.
\]
We can state our result.
\begin{thm}
Let $\Gamma$ be a compact surface of revolution, then the restriction
of the Fourier transform to $\Gamma$ is a bounded operator from $L_{rad}^{p}L_{zen}^{2}L_{ang}^{2}\left(\mathbb{R}^{n+1}\right)$
to $L^{2}\left(\Gamma\right)$, i.e. there eists a finite constant
$C_{p}$ such that
\begin{multline}
\left(\int_{-\infty}^{\infty}\int_{S^{n-1}}g\left(z\right)^{n-1}\sqrt{1+g'\left(z\right)^{2}}\left|\hat{f}\left(g\left(z\right),\theta,z\right)\right|^{2}d\theta dz\right)^{\frac{1}{2}}\lesssim\\
\lesssim C_{p}\left\Vert f\right\Vert _{L_{rad}^{p}L_{zen}^{2}L_{ang}^{2}\left(\mathbb{R}^{n+1}\right)},
\end{multline}
so long as $1\leq p<\frac{2n}{n+1}$.
\end{thm}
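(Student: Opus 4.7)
The plan is to reduce the theorem to Vega's $L^p_{rad}L^2_{ang}(\mathbb{R}^n)$ restriction estimate for the sphere, by slicing $\Gamma$ in the zenithal variable. First I would take the partial Fourier transform only in $z$,
\[
\tilde{f}(r,\theta,\zeta) := \int_{-\infty}^{\infty} f(r,\theta,z)\,e^{-2\pi i z\zeta}\,dz,
\]
so that $\hat{f}(\rho\phi,\zeta)=\mathcal{F}_n[\tilde{f}(\cdot,\cdot,\zeta)](\rho\phi)$, where $\mathcal{F}_n$ denotes the Fourier transform in the first $n$ variables. Writing the surface element of $\Gamma$ as $g(\zeta)^{n-1}\sqrt{1+g'(\zeta)^2}\,d\phi\,d\zeta$, the quantity to be controlled becomes
\[
\|\hat f\,|_\Gamma\|_{L^2(\Gamma)}^2 = \int_{a}^{b}\sqrt{1+g'(\zeta)^2}\,\Bigl(\int_{S^{n-1}_{g(\zeta)}}\bigl|\mathcal{F}_n[\tilde{f}(\cdot,\cdot,\zeta)]\bigr|^2 d\sigma\Bigr)\,d\zeta.
\]

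For each fixed $\zeta$ the inner integral is exactly the squared Fourier-restriction norm to the sphere of radius $g(\zeta)$ in $\mathbb{R}^n$, so Vega's theorem combined with the elementary rescaling $x\mapsto Rx$ yields
\[
\int_{S^{n-1}_R}\!|\mathcal{F}_n h|^2\,d\sigma \;\le\; C_p\,R^{\frac{2n}{p}-n-1}\,\|h\|_{L^p_{rad}L^2_{ang}(\mathbb{R}^n)}^2,
\]
valid throughout $1\le p<\frac{2n}{n+1}$, where the exponent $\frac{2n}{p}-n-1$ is strictly positive. Compactness of $\Gamma$ then makes the resulting weight $\sqrt{1+g'(\zeta)^2}\,g(\zeta)^{\frac{2n}{p}-n-1}$ integrable on $[a,b]$; at any cap $g(\zeta)=0$ its behaviour is no worse than that of the standard half-sphere, which is admissible. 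After this pointwise-in-$\zeta$ estimate the problem reduces to showing
\[
\int_{a}^{b}\|\tilde{f}(\cdot,\cdot,\zeta)\|_{L^p_{rad}L^2_{ang}(\mathbb{R}^n)}^{2}\,d\zeta \;\lesssim\; \|f\|_{L^p_{rad}L^2_{zen}L^2_{ang}(\mathbb{R}^{n+1})}^{2}.
\]

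Since $p<\frac{2n}{n+1}\le 2$, Minkowski's integral inequality allows me to pull the outer $L^2_\zeta$ inside the $L^p_r$ norm, and then Plancherel in the duality $z\leftrightarrow\zeta$, applied fibrewise for each $(r,\theta)$, replaces $\tilde{f}$ by $f$ and produces exactly the mixed norm on the right-hand side. The principal technical delicacy I anticipate is the bookkeeping at the caps of $\Gamma$, where $g$ and $g'$ may be simultaneously singular: one has to verify that the integrable weight produced by the rescaled Vega bound is really absorbed in the $\zeta$-integration, and that Minkowski is invoked in the one admissible direction, which is precisely what the restriction $p\le 2$ guarantees. Aside from these issues the proof is a clean fibred application of Vega's radial-angular restriction theorem.
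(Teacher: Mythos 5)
Your argument is correct under the paper's standing hypotheses on $g$ (in particular $A=\sup_I|g|<\infty$ and $B=\sup_I|g'|<\infty$), but it takes a genuinely different route. You slice in the zenithal frequency variable, apply Vega's sharp $L^p_{rad}L^2_{ang}(\mathbb{R}^n)\to L^2(S^{n-1})$ restriction theorem as a black box on each slice (correctly rescaled to the sphere of radius $g(\zeta)$, with the favourable exponent $\tfrac{2n}{p}-n-1>0$ precisely because $p<\tfrac{2n}{n+1}$), and then glue the slices using the generalized Minkowski inequality in the admissible direction $p\le 2$ followed by Plancherel in $z\leftrightarrow\zeta$. The paper instead dualizes to the extension estimate $\|\widehat{fd\Gamma}\|_{L^{q,2,2}}\lesssim\|f\|_{L^2(\Gamma)}$ for $q>\tfrac{2n}{n-1}$, expands in spherical harmonics, applies Plancherel in $z$, and reduces everything to a self-contained Bessel-function inequality (Lemma \ref{lem:Restriction}) proved by dyadic decomposition in $\rho$ and the three asymptotic regimes of $J_\nu$; in effect it reproves a $z$-integrated version of Vega's estimate from scratch. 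Your route is shorter and more modular but consumes Vega's theorem in its full sharp range, whereas the paper's computation is essentially self-contained and also sets up the spherical-harmonics/Bessel kernel machinery that is reused in Section 4. One caveat on your ``cap'' discussion: under the paper's hypotheses the weight $\sqrt{1+g'(\zeta)^2}\,g(\zeta)^{\frac{2n}{p}-n-1}$ is in fact bounded, so there is no delicacy; if you truly wanted to allow closed surfaces where $g'$ blows up at the caps, the weight is merely integrable, and your final Minkowski--Plancherel step would then require controlling a weighted $L^2_\zeta$ norm of $\tilde f$ by the unweighted $L^2_z$ norm of $f$, which fails for unbounded weights --- that case would need an additional argument rather than the remark you make.
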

A central point in this area is C. Fefferman's observation that the
disc multiplier in $\mathbb{R}^{n}$ for $n\geq2$, given by the formula
\[
\widehat{T_{0}f}\left(\xi\right)=\chi_{B\left(0,1\right)}\left(\xi\right)\hat{f}\left(\xi\right),
\]
is bounded on $L^{p}\left(\mathbb{R}^{n}\right)$ only in the trivial
case $p=2$. However, it was later proved (see ref \cite{key-8} and
\cite{key-9}) that $T_{0}$ is bounded on the mixed norm spaces $L_{rad}^{p}L_{ang}^{2}\left(\mathbb{R}^{n}\right)$
if and only if $\frac{2n}{n+1}<p<\frac{2n}{n-1}$. Here we extend
that result to a more general class of radial multipliers. 
\begin{thm}
\label{thm:Ball}Let $T_{m}$ be a Fourier multiplier defined by 
\begin{equation}
\left(T_{m}f\right)\hat{}\left(\xi\right):=m\left(\left|\xi\right|\right)\hat{f}\left(\xi\right),\label{eq:Definition}
\end{equation}
for all rapidly decreasing smooth functions $f$, where $m$ satisfies
the following hypothesis:
\begin{enumerate}
\item $\text{\emph{Supp}}\left(m\right)\subset\left[a,b\right]\subset\mathbb{R}^{+}$,
and $m$ is differentiable in the interior $\left(a,b\right)$.
\item $\int_{a}^{b}\left|m'\left(x\right)\right|dx<\infty.$ 
\end{enumerate}
$T_{m}$ is then bounded in $L_{rad}^{p}L_{ang}^{2}\left(\mathbb{R}^{n}\right)$
so long as $\frac{2n}{n+1}<p<\frac{2n}{n-1}$.
\end{thm}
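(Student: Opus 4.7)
The plan is to reduce Theorem~\ref{thm:Ball} to the already-known boundedness of the disc multiplier $T_{1}$, defined by $\widehat{T_{1}f}(\xi)=\chi_{B(0,1)}(\xi)\hat{f}(\xi)$, on $L^{p}_{rad}L^{2}_{ang}(\mathbb{R}^{n})$ in the range $\frac{2n}{n+1}<p<\frac{2n}{n-1}$ (the result of \cite{key-8,key-9} recalled just above the statement). The core observation is that hypothesis (2) allows one to represent $m$ as a superposition of characteristic functions of intervals $[0,s]$. The fundamental theorem of calculus gives, for $r\in(a,b)$,
\begin{equation*}
m(r)=-\int_{r}^{b}m'(s)\,ds=\int_{a}^{b}(-m'(s))\,\chi_{[0,s]}(r)\,ds,
\end{equation*}
and, on the operator side,
\begin{equation*}
T_{m}f=\int_{a}^{b}(-m'(s))\,T_{s}f\,ds,
\end{equation*}
where $T_{s}$ denotes the ball multiplier of radius $s$, that is, $\widehat{T_{s}f}(\xi)=\chi_{B(0,s)}(\xi)\hat{f}(\xi)$.

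The second ingredient is scale invariance. Writing $D_{s}f(x)=f(sx)$, one checks that $\|D_{s}f\|_{L^{p}_{rad}L^{2}_{ang}}=s^{-n/p}\|f\|_{L^{p}_{rad}L^{2}_{ang}}$ and, by a direct Fourier-inversion computation, $T_{s}=D_{s}\circ T_{1}\circ D_{1/s}$. Combining these two facts shows that $T_{s}$ has the same $L^{p}_{rad}L^{2}_{ang}$-operator norm as $T_{1}$, uniformly in $s>0$. Applying Minkowski's inequality (in the Bochner sense) to the superposition and invoking the cited disc-multiplier theorem then gives
\begin{equation*}
\|T_{m}f\|_{L^{p}_{rad}L^{2}_{ang}}\leq\int_{a}^{b}|m'(s)|\,\|T_{s}f\|_{L^{p}_{rad}L^{2}_{ang}}\,ds\leq\|T_{1}\|\,\|m'\|_{L^{1}(a,b)}\,\|f\|_{L^{p}_{rad}L^{2}_{ang}},
\end{equation*}
which is the desired bound, finite precisely for $p$ in the stated range.

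The only delicate point is the first representation: $m$ is not assumed continuous up to the boundary, so in full generality one should interpret $-m'(s)\,ds$ as the Lebesgue-Stieltjes measure $-dm(s)$, whose total variation is controlled by $\int_{a}^{b}|m'(s)|\,ds+|m(a^{+})|+|m(b^{-})|$. The two endpoint jumps contribute harmless extra terms of the form $m(a^{+})T_{a}f$ and $-m(b^{-})T_{b}f$, which are themselves ball multipliers and absorbed into the same estimate; equivalently, one can first approximate $m$ by smooth compactly supported symbols vanishing at $a$ and $b$ and take limits. The interchange of the Fourier transform with the $s$-integration is routine on Schwartz inputs, and density then extends the bound to all of $L^{p}_{rad}L^{2}_{ang}$.
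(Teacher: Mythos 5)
Your proof is correct and is essentially the paper's argument: the authors likewise decompose $T_{m}$ as $m(b)T^{b}-m(a)T^{a}-\int_{a}^{b}m'(s)T^{s}\,ds$ (their integration by parts on the Bessel kernel, via Bessel's equation, is exactly your fundamental-theorem-of-calculus representation of $m$ as a superposition of $\chi_{[0,s]}$ plus endpoint terms, and their operator $T^{s}$ is precisely the ball multiplier of radius $s$ written on spherical harmonics), and they establish the uniform bound on $T^{s}$ by the same dilation argument $t'=st$, $r'=sr$ reducing to the known $s=1$ case of the disc multiplier. The only difference is presentational: you argue directly at the multiplier level and invoke the disc-multiplier theorem as a black box, whereas the paper carries out the subordination on the explicit Bessel-function kernels.
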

Finally, let us observe that Theorem \ref{thm:Ball} admits different
extensions taking into account Littlewood-Paley theory. Some vector
valued and weighted inequalities are satisfied by $T_{0}$ and the
so called \emph{universal Kakeya maximal function }acting on radial
functions.

\section{Restriction in the discrete setting}
\begin{proof}[Proof of Theorem \ref{thm:Discrete}]
 First let us observe that, by an easy argument, we can assume $M=1$
without loss of generality. Next we take a smooth cut-off $\varphi$
sot that 
\begin{eqnarray*}
\varphi & \equiv & 1\text{ on }B\left(0,\frac{1}{2}\right),\\
\varphi & \equiv & 0\text{ when }\left\Vert x\right\Vert \geq1,\\
\varphi & \in & C_{0}^{\infty}\left(\mathbb{R}^{2}\right).
\end{eqnarray*}
We can then write 
\begin{eqnarray*}
f\left(\xi\right) & = & \sum_{k}a_{k}\varphi\left(\xi+\xi_{k}\right)e^{2\pi i\xi\cdot q}\\
 & = & \sum_{k}a_{k}\varphi_{k}\left(\xi\right)e^{2\pi i\xi\cdot q},
\end{eqnarray*}
where $q$ is a point in $\mathbb{R}^{2}$. We have 
\[
\hat{f}\left(x\right)=\sum_{k}a_{k}\hat{\varphi}\left(x-q\right)e^{2\pi i\xi_{k}\cdot\left(x-q\right)}.
\]
Note that the $L^{4}$ norm of $\hat{f}$ majorizes the left hand
side of (\ref{eq:Discrete}),
\begin{eqnarray*}
\int\left|\hat{f}\left(x\right)\right|^{4}dx & \geq & \int_{x-q\in Q_{0}}\left|\sum a_{k}e^{2\pi i\xi_{k}\cdot\left(x-q\right)}\hat{\varphi}\left(x-q\right)\right|^{4}dx\\
 & \gtrsim & \int_{Q}\left|\sum a_{k}e^{2\pi i\xi_{k}\cdot x}\right|^{4}dx,
\end{eqnarray*}
where $Q_{0}=\left[-\frac{1}{2},\frac{1}{2}\right]^{2}$ and $Q=q+Q_{0}$.

On the other hand, we have 
\begin{eqnarray*}
\int\left|\hat{f}\left(x\right)\right|^{4}dx & = & \int\left|f\ast f\left(\xi\right)\right|^{2}d\xi\\
 & = & \int\left|\sum_{k,j}a_{k}a_{j}\varphi_{k}\ast\varphi_{j}\left(\xi\right)e^{i\xi\cdot q}\right|^{2}d\xi.
\end{eqnarray*}
Furthermore, because the supports of $\varphi_{k}$ and $\varphi_{j}$
have a finite overlapping, uniformly on the radius $R$. 
\[
\int\left|\hat{f}\left(x\right)\right|^{4}dx\lesssim\left(\sum\left|a_{k}\right|^{2}\right)^{2},
\]
q.e.d. 
\end{proof}
Using similar arguments we can obtain the following analogous result:
In $\mathbb{R}^{2}$ let us consider the parabola $\gamma\left(t\right)=\left(t,t^{2}\right)$
and a set of real numbers $\left\{ \xi_{j}\right\} $ so that $\left|t_{j+1}-t_{j}\right|\geq1$,
then
\[
\sup_{\mu\left(Q\right)=1}\left\Vert \sum_{j}a_{j}e^{2\pi i\gamma\left(t_{j}\right)\cdot x}\right\Vert _{L^{4}\left(Q\right)}\lesssim\left(\sum\left|a_{j}\right|^{2}\right)^{\frac{1}{2}}.
\]
An interesting open question is to decide if the $L^{4}$ norm could
be replaced by an $L^{p}$ norm ($p>4$) in the inequality above.
It is known that $p=6$ fails, but for $4<p<6$ it is, as far as we
know, an interesting open problem \cite{key-10}.

\section{The restriction conjecture in mixed norm spaces}

Recall that in $\mathbb{R}^{n+1}$ we establish cylindrical coordinates
$\left(r,\theta,z\right)$, where $\left(r,\theta\right)$ corresponds
to the usual spherical coordinates in $\mathbb{R}^{n}$ and $z\in\mathbb{R}$
denotes the zenithal component. We will also use the notation $\left(\rho,\phi,\zeta\right)$
to refer to the same coordinate system.

The $L_{rad}^{p}L_{zen}^{2}L_{ang}^{2}\left(\mathbb{R}^{n+1}\right)$
norm is therefore given by 
\begin{equation}
\left\Vert f\right\Vert _{L^{p,2,2}}=\left(\int_{0}^{\infty}r^{n-1}\left(\int_{-\infty}^{\infty}\int_{S^{n-1}}\left|f\left(r,\theta,z\right)\right|^{2}d\theta dz\right)^{\frac{p}{2}}dr\right)^{\frac{1}{p}}.
\end{equation}

Let $g$ be a continuous positive function supported on a compact
interval $I$ of the real line that is almost everywhere differentiable,
and consider the surface of revolution in $\mathbb{R}^{n+1}$ given
by

\begin{equation}
\Gamma:=\left\{ \left(g\left(z\right),\theta,z\right)\in\mathbb{R}^{n+1},\ \theta\in S^{n-1},-\infty<z<\infty\right\} .
\end{equation}
We are interested in the restriction to $\Gamma$ of the Fourier transform
of functions in the Schwartz class $\mathcal{S}\left(\mathbb{R}^{n+1}\right)$.
The restriction inequality 
\[
\left\Vert \hat{f}\right\Vert _{L^{2}\left(\Gamma\right)}\leq C_{p}\left\Vert f\right\Vert _{L^{p,2,2}\left(\mathbb{R}^{n+1}\right)}
\]
for $1\leq p<\frac{2n}{n+1}$ is, by duality, equivalent to the extension
estimate:
\[
\left\Vert \widehat{fd\Gamma}\right\Vert _{L^{q,2,2}\left(\mathbb{R}^{n+1}\right)}\leq C_{q}\left\Vert f\right\Vert _{L^{2}\left(\Gamma\right)}
\]
for $q>\frac{2n}{n-1}$.

To compute $\widehat{fd\Gamma}$ let us recall 
\begin{eqnarray*}
d\Gamma & = & g\left(z\right)^{n-1}\sqrt{1+\left(g'\left(z\right)\right)^{2}}dzd\theta\\
 & = & G_{1}\left(z\right)dzd\theta,
\end{eqnarray*}
so that 
\begin{equation}
\widehat{fd\Gamma}\left(\rho,\phi,\zeta\right)=\int_{-\infty}^{\infty}\int_{S^{n-1}}G_{1}\left(z\right)f\left(z,\theta\right)e^{-iz\zeta}e^{-i\left(\rho g\left(z\right)\right)\theta\cdot\phi}d\theta dz.\label{eq:Fourier1}
\end{equation}
Next we use the spherical harmonic expansion 
\[
f\left(z,\theta\right)=\sum_{k,j}a_{k,j}\left(z\right)Y_{k}^{j}\left(\theta\right),
\]
where for each $k$, $\left\{ Y_{k}^{j}\right\} _{j=1,\ldots,d\left(k\right)}$
is an orthonormal basis of the spherical harmonics degree $k$. We
then obtain:

\begin{multline*}
\widehat{fd\Gamma}\left(\rho,\phi,\zeta\right)=\sum_{k,j}2\pi i^{k}Y_{k}^{j}\left(\phi\right)\rho^{-\frac{n-2}{2}}\int_{-\infty}^{\infty}g\left(z\right)^{\frac{n}{2}}\left(1+\left(g'\left(z\right)\right)^{2}\right)^{\frac{1}{2}}\cdot\\
\cdot a_{k,j}\left(z\right)J_{k+\frac{n-2}{2}}\left(\rho g\left(z\right)\right)e^{-iz\zeta}dz,
\end{multline*}
where $J_{\nu}$ denotes Bessel's function of order $\nu$ (see ref.
\cite{key-13}). Denoting by $G_{2}\left(z\right):=g\left(z\right)^{\frac{n}{2}}\left(1+\left(g'\left(z\right)\right)^{2}\right)^{\frac{1}{2}}$,
the Fourier transform $\widehat{fd\Gamma}$ becomes 
\begin{equation}
\sum_{k,j}2\pi i^{k}Y_{k}^{j}\left(\phi\right)\rho^{-\frac{n-2}{2}}\int_{-\infty}^{\infty}G_{2}\left(z\right)a_{k,j}\left(z\right)J_{k+\frac{n-2}{2}}\left(\rho g\left(z\right)\right)e^{-iz\zeta}dz.
\end{equation}

Taking into account the orthogonality of the elements of the basis
$\left\{ Y_{k}^{j}\right\} $ together with Plancherel's Theorem in
the $z$-variable, we obtain that the mixed norm $\left\Vert \widehat{fd\Gamma}\right\Vert _{L^{q,2,2}}^{q}$
is up to a constant equal to 
\begin{gather}
\int_{0}^{\infty}\rho^{-q\frac{n-2}{2}+n-1}\left(\sum_{k,j}\int_{-\infty}^{\infty}\left|g\left(\zeta\right)\right|^{n}\left|1+\left(g'\left(\zeta\right)\right)^{2}\right|\left|a_{k,j}\left(\zeta\right)\right|^{2}\left|J_{\nu_{k}}\left(\rho g\left(\zeta\right)\right)\right|^{2}d\zeta\right)^{\frac{q}{2}}d\rho,
\end{gather}
where $\nu_{k}=k+\frac{n-2}{2}$. On the other hand we have 
\begin{eqnarray}
\int_{\Gamma}\left|f\right|^{2} & = & \int_{-\infty}^{\infty}\int_{S^{n-1}}\left|\sum_{j,k}a_{k,j}\left(z\right)Y_{k}^{j}\left(\theta\right)\right|^{2}g\left(z\right)^{n-1}\sqrt{1+g'\left(z\right)^{2}}d\theta dz\nonumber \\
 & = & \sum_{j,k}\int_{-\infty}^{\infty}\left|a_{k,j}\left(z\right)\right|^{2}g\left(z\right)^{n-1}\sqrt{1+g'\left(z\right)^{2}}dz.
\end{eqnarray}
Therefore our theorem will be a consecuence of the following fact:
\begin{lem}
\label{lem:Restriction}Given any sequence of positive indices $\left\{ \nu_{j}\right\} $
with $\nu_{j}\geq\frac{n-2}{2}$ for all $j$ and Schwartz functions
$a_{j}$, the following inequality holds:
\begin{multline}
\int_{0}^{\infty}\rho^{-q\frac{n-2}{2}+n-1}\left(\sum_{j}\int_{-\infty}^{\infty}\left|g\left(z\right)\right|^{n}\left|1+\left(g'\left(z\right)\right)^{2}\right|\left|a_{j}\left(z\right)\right|^{2}\left|J_{\nu_{j}}\left(\rho g\left(z\right)\right)\right|^{2}dz\right)^{\frac{q}{2}}d\rho\\
\lesssim\left(\sum_{j}\int_{-\infty}^{\infty}\left|g\left(z\right)\right|^{n-1}\left(1+\left(g'\left(z\right)\right)^{2}\right)^{\frac{1}{2}}\left|a_{j}\left(z\right)\right|^{2}dz\right)^{\frac{q}{2}},\label{eq:Restriction2}
\end{multline}
for $q>\frac{2n}{n-1}$. \end{lem}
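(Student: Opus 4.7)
The plan is to reduce the vector-valued inequality to a one-variable Bessel integral estimate by Minkowski's integral inequality, and then handle that estimate by the classical asymptotics of $J_\nu$. Writing the left-hand side as $\|\psi\|_{L^{q/2}(\rho^{\alpha}\,d\rho)}^{q/2}$ with $\alpha = -q(n-2)/2 + n-1$ and $\psi(\rho) = \sum_j \int g(z)^n (1+(g'(z))^2)\,|a_j(z)|^2\,|J_{\nu_j}(\rho g(z))|^2\,dz$, the fact that $q/2 > 1$ lets me pull both the summation in $j$ and the $z$-integration outside the $L^{q/2}$-norm by Minkowski's integral inequality. After the substitution $u = \rho g(z)$ in the resulting inner $\rho$-norm, what remains is the universal integral $\int_0^\infty u^\alpha |J_\nu(u)|^q\,du$, fully decoupled from the geometry of $\Gamma$; the geometric dependence is packaged into the scaling factor $g(z)^{-(\alpha+1)}$.

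The central technical step is then the uniform-in-$\nu$ bound $\int_0^\infty u^\alpha |J_\nu(u)|^q\,du \leq C$ for all $\nu \geq (n-2)/2$. I would split $(0,\infty)$ into the three classical zones of $J_\nu$. For $u \leq \nu/2$ the Poisson-series bound $|J_\nu(u)| \lesssim (eu/2\nu)^\nu$ is exponentially small in $\nu$ and contributes negligibly. In the Airy transition region $\nu/2 \leq u \leq 2\nu$ of effective width $\sim \nu^{1/3}$, one uses $|J_\nu(u)| \lesssim \nu^{-1/3}$, producing a contribution $\lesssim \nu^{\alpha + (1-q)/3}$. In the oscillatory tail $u \geq 2\nu$ the bound $|J_\nu(u)| \lesssim u^{-1/2}$ yields $\lesssim \nu^{\alpha+1-q/2}$. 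The hypothesis $q > 2n/(n-1)$ is precisely what makes both exponents non-positive: the tail exponent equals $n - q(n-1)/2$ and is strictly negative by hypothesis, while the Airy exponent reduces after elementary algebra to the condition $q \geq 2(3n-2)/(3n-4)$, a weaker inequality implied by $q > 2n/(n-1)$ for every $n \geq 2$.

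Reassembling produces $[\mathrm{LHS}]^{2/q} \lesssim \sum_j \int g(z)^{2n-2-2n/q} (1+(g'(z))^2)\,|a_j(z)|^2\,dz$. Since $g$ has compact support, the excess factor $g^{n-1-2n/q}$, whose exponent is strictly positive in our range, is absorbed into the universal constant, reducing the bound to $\sum_j \int g^{n-1}(1+(g')^2)\,|a_j|^2\,dz$. The main obstacle I anticipate is then the mismatch between this $(1+(g')^2)$ and the $(1+(g')^2)^{1/2}$ appearing on the target right-hand side: Minkowski has cost one extra factor of $\sqrt{1+(g')^2}$, and closing that gap either requires controlling the arc-length element $\sqrt{1+(g')^2}\,dz$ by the $C^1$-regularity of the parametrization of the compact surface $\Gamma$, or else invoking a sharper decomposition of the extension operator (for instance dyadic in $\rho$, matching the sizes of $\rho g(z)$ and $\nu_j$) that avoids the Minkowski loss from the outset.
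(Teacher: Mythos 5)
Your argument is correct and reaches the lemma by a genuinely different and more economical route than the one in the paper. You decouple everything at the outset with a single application of Minkowski's integral inequality in $L^{q/2}(\rho^{\alpha}d\rho)$, $\alpha=-q\frac{n-2}{2}+n-1$ (legitimate since $q/2>1$), and reduce the matter to the uniform scalar estimate
\[
\sup_{\nu\geq\frac{n-2}{2}}\int_{0}^{\infty}u^{\alpha}\left|J_{\nu}\left(u\right)\right|^{q}du<\infty,\qquad q>\frac{2n}{n-1},
\]
after which the geometry of $\Gamma$ survives only through the Jacobian factor $g(z)^{-(\alpha+1)}$ and the constants $A=\sup\left|g\right|$, $B=\sup\left|g'\right|$. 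The paper instead keeps the square function intact: it decomposes $\rho$ dyadically, splits the indices into the three regimes $2\nu_{j}<\rho g(z)$, $\nu_{j}>2\rho g(z)$ and $\nu_{j}\sim\rho g(z)$, and in the critical regime performs a further partition into intervals $G_{\alpha}$ of length $M^{1/3}g(z)^{-2/3}$ followed by a discrete Young/Schur convolution bound, recovering the subcritical range $\frac{2n}{n-1}<q\leq4$ by H\"older on each dyadic block. Your route is shorter and makes the role of the threshold $q>\frac{2n}{n-1}$ completely transparent (it is exactly the convergence of the oscillatory tail $\int^{\infty}u^{\alpha-q/2}du$); the paper's route preserves the almost-orthogonality in $j$, which is what one would need if the termwise Minkowski bound were lossy --- it is not for this statement, but it would matter for endpoint or vector-valued refinements. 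Two points to tighten. First, in the transition zone the flat bound $\left|J_{\nu}\right|\lesssim\nu^{-1/3}$ by itself does not justify the ``effective width $\nu^{1/3}$'', since the interval $[\nu/2,2\nu]$ has length $\sim\nu$; you need the decay away from the turning point, i.e.\ items (3) and (4) of Lemma \ref{lem:Decay}. Moreover, for $\frac{2n}{n-1}<q\leq4$ (which occurs when $n\geq3$) the integral $\int\rho^{-q/4}d\rho$ diverges at infinity, so the transition contribution is $\nu^{\alpha+1-q/2}=\nu^{n-q\frac{n-1}{2}}$ rather than $\nu^{\alpha+\frac{1-q}{3}}$; the exponent is still non-positive under the hypothesis, so nothing breaks, but your stated exponent is only valid for $q>4$. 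Second, the mismatch between $\left(1+(g')^{2}\right)$ and $\left(1+(g')^{2}\right)^{1/2}$ that you flag at the end is not a genuine obstacle: the paper's own reduction (the Remark preceding Lemma \ref{lem:Decay}) absorbs exactly these factors into constants depending on $A$ and $B$, which is the first of the two resolutions you propose, so no sharper decomposition is required.
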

\begin{rem}
Taking into account the hypothesis about $g$ we will look for estimates
depending upon $A=\sup_{x\in I}\left|g\left(x\right)\right|$ and
$B=\sup_{x\in I}\left|g'\left(x\right)\right|$, where $I$ is the
compact support of $g$. It is also easy to see that we can reduce
ourselves to consider the sums over the family of indices $\left\{ \nu_{j}\right\} _{j=1}^{\infty}$
such that $\nu_{j}\geq\frac{n-2}{2}$. Therefore it is enough to show
\begin{multline}
\int_{0}^{\infty}\rho^{-q\frac{n-2}{2}+n-1}\left(\sum_{j}\int_{-\infty}^{\infty}\left|b_{j}\left(z\right)\right|^{2}\left|J_{\nu_{j}}\left(\rho g\left(z\right)\right)\right|^{2}dz\right)^{\frac{q}{2}}d\rho\\
\lesssim\left(\sum_{j}\int_{-\infty}^{\infty}\left|b_{j}\left(z\right)\right|^{2}dz\right)^{\frac{q}{2}}\label{eq:Restriction3}
\end{multline}
for a family of smooth functions $\left\{ b_{j}\right\} _{j}$ and
indexes $\nu_{j}\geq\frac{n-2}{2}.$
\end{rem}
In order to show (\ref{eq:Restriction3}) we will need a sharp control
of the decay of Bessel functions; namely the following estimates:
\begin{lem}
\label{lem:Decay}The following estimates hold for $\nu\geq1$.
\begin{enumerate}
\item $J_{\nu}\left(r\right)\leq\frac{1}{r^{1/2}}$, when $r\geq2\nu$.
\item $J_{\nu}\left(r\right)\leq\frac{1}{\nu}$, when $r\leq\frac{1}{2}\nu$.
\item $J_{\nu}\left(\nu+\rho\nu^{1/3}\right)\leq\frac{1}{\rho^{1/4}\nu^{1/3}}$,
when $0\leq\rho\leq\frac{3}{2}\nu^{2/3}$.
\item $J_{\nu}\left(\nu-\rho\nu^{1/3}\right)\leq\frac{1}{\rho\nu^{1/3}}$,
when $1\leq\rho\leq\frac{3}{2}\nu^{2/3}$.
\item $J_{\nu}$$\left(r\right)\leq r^{\nu},$ as $r\to0$.
\end{enumerate}
\end{lem}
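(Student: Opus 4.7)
The plan is to establish each of the five estimates by the classical asymptotic analysis of $J_\nu$ in the appropriate regime. The three relevant regimes are dictated by the transition point $r=\nu$: the oscillatory regime ($r>\nu$), the exponentially subdominant regime ($r<\nu$), and the Airy transition regime ($|r-\nu|\lesssim \nu^{1/3}$). My universal tool will be the integral representation
\[
J_\nu(r)=\frac{1}{\pi}\int_0^\pi \cos(\nu\theta-r\sin\theta)\,d\theta-\frac{\sin(\nu\pi)}{\pi}\int_0^\infty e^{-\nu t-r\sinh t}\,dt,
\]
analyzed via Debye's method of steepest descent. The saddle points of the phase $\nu\theta-r\sin\theta$ solve $\cos\theta=\nu/r$, and their nature governs which of (1)--(4) applies.

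For (1), when $r\ge 2\nu$, the saddle $\theta_0=\arccos(\nu/r)\in(0,\pi/2]$ is real and bounded away from $0$ and $\pi$; a standard stationary-phase computation produces a main term of size $(r\sin\theta_0)^{-1/2}=(r^2-\nu^2)^{-1/4}\le r^{-1/2}$. For (2), when $r\le \nu/2$, no real saddle exists in $(0,\pi)$, and deforming the contour through the complex saddle at $w_0=\mathrm{arccosh}(\nu/r)$ yields exponential decay $J_\nu(r)\lesssim e^{-c\nu}$, which is far stronger than the required $\nu^{-1}$ bound.

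For (3) and (4), the two saddles coalesce at $\theta=0$ as $r\to\nu$, and the Gaussian stationary-phase approximation breaks down. I would rescale $\theta=\nu^{-1/3}\zeta$ and expand the phase to cubic order to obtain the uniform Airy approximation
\[
J_\nu\bigl(\nu+z\,\nu^{1/3}\bigr)=\left(\tfrac{2}{\nu}\right)^{1/3}\mathrm{Ai}\bigl(-2^{1/3}z\bigr)+O(\nu^{-1})
\]
for $|z|\lesssim \nu^{2/3}$. Combined with the standard bounds $|\mathrm{Ai}(-x)|\lesssim x^{-1/4}$ and $|\mathrm{Ai}(x)|\lesssim e^{-cx^{3/2}}$ for $x\ge 1$, this gives (3) and (4) (the latter being much stronger than claimed, the linear decay $(\rho\nu^{1/3})^{-1}$ being an extremely weak consequence of the super-exponential Airy decay). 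Finally, (5) is immediate from the leading term of the power series $J_\nu(r)=\sum_{k\ge 0}\frac{(-1)^k}{k!\,\Gamma(\nu+k+1)}(r/2)^{\nu+2k}$, whose tail is absolutely dominated by $r^\nu/(2^\nu\Gamma(\nu+1))$ as $r\to 0$.

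The main technical obstacle is controlling the remainder in the Airy approximation \emph{uniformly in} $\nu$ across the whole transition range $1\le \rho\le \tfrac32\nu^{2/3}$, where one must smoothly patch the cubic (Airy) scaling near $\theta=0$ to the ordinary stationary-phase regime as $\rho$ approaches $\nu^{2/3}$. These uniform Bessel asymptotics are however entirely classical (see Watson's treatise, or Szeg\H{o}'s book on orthogonal polynomials), so in practice the writeup reduces to citing the appropriate standard estimates and checking that the constants take the specific form demanded in (1)--(4).
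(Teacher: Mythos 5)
Your proposal is correct and follows essentially the same route as the paper, which simply observes that these estimates follow from the stationary phase method applied to the standard integral representation of $J_\nu$ and cites Watson's treatise and related references for the uniform asymptotics. Your additional detail on the Debye saddle points, the Airy transition regime, and the verification that the super-exponential decay in case (4) implies the stated linear bound is a faithful elaboration of exactly that classical argument.
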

These assymptotics follow by the stationary phase method as it is
shown in \cite{key-13}, \cite{key-14} and \cite{key-15}.
\begin{proof}[Proof of Lemma \ref{lem:Restriction} ]
To prove \ref{eq:Restriction3} we shall first decompose the $\rho$-integration
in dyadic parts: $[0,\infty)=[0,1)\bigcup\cup_{n=0}^{\infty}[2^{n},2^{n+1})$. 

\begin{multline}
\int_{0}^{1}\rho^{-q\frac{n-2}{2}+n-1}\left(\sum_{j}\int_{-\infty}^{\infty}\left|b_{j}\left(z\right)\right|^{2}\left|J_{\nu_{j}}\left(\rho g\left(z\right)\right)\right|^{2}dz\right)^{\frac{q}{2}}d\rho\\
+\sum_{M}\int_{M}^{2M}\rho^{-q\frac{n-2}{2}+n-1}\left(\sum_{j}\int_{-\infty}^{\infty}\left|b_{j}\left(z\right)\right|^{2}\left|J_{\nu_{j}}\left(\rho g\left(z\right)\right)\right|^{2}dz\right)^{\frac{q}{2}}d\rho,\label{eq:restriction dyadic}
\end{multline}
where $M=2^{m},$ $m=0,1,\ldots$ 

For the lower integrand, we have the following splitting: 
\begin{eqnarray*}
\int_{0}^{1}\rho^{-q\frac{n-2}{2}+n-1}\left[\ldots\right]^{\frac{q}{2}}d\rho & = & \int_{0}^{\frac{1}{A}}\rho^{-q\frac{n-2}{2}+n-1}\left[\ldots\right]^{\frac{q}{2}}d\rho+\int_{\frac{1}{A}}^{1}\rho^{-q\frac{n-2}{2}+n-1}\left[\ldots\right]^{\frac{q}{2}}d\rho\\
 & = & I+II.
\end{eqnarray*}
In order to bound $I$ we invoke Minkowski's inequality and property
5. of Lemma \ref{lem:Decay}. 
\begin{eqnarray*}
I & \lesssim & \left[\int_{-\infty}^{\infty}\sum_{j}\left(\int_{0}^{\frac{1}{A}}\left\{ \rho^{-\left(n-2\right)+\frac{2}{q}\left(n-1\right)}\left|b_{j}\left(z\right)\right|^{2}\left|J_{\nu_{j}}\left(\rho z\right)\right|^{2}\right\} ^{\frac{q}{2}}d\rho\right)^{\frac{2}{q}}dz\right]^{\frac{q}{2}}.\\
 & \leq & \left[\int_{-\infty}^{\infty}\sum_{j}\left|b_{j}\left(z\right)\right|^{2}A^{2\nu_{j}}\left(\int_{0}^{\frac{1}{A}}\rho^{-q\frac{n-2}{2}+\left(n-1\right)+q\nu_{j}}d\rho\right)^{\frac{2}{q}}dz\right]^{\frac{q}{2}},
\end{eqnarray*}
where $A=\left\Vert g\right\Vert _{\infty}$. Since the sum is taken
over all $\nu_{j}\geq\frac{n-2}{2}$, the inner integrand is well
defined and we can bound 
\begin{equation}
I\lesssim A^{q\frac{n-1}{2}-n}\left[\sum_{j}\int_{-\infty}^{\infty}\left|b_{j}\left(z\right)\right|^{2}dz\right]^{\frac{q}{2}}.
\end{equation}
The second part is similarly bounded 
\begin{equation}
II\lesssim\left(1+A^{q\frac{n-1}{2}-n}\right)\left[\sum_{j}\int_{-\infty}^{\infty}\left|b_{j}\left(z\right)\right|^{2}dz\right]^{\frac{q}{2}}.
\end{equation}

Then Lemma \ref{lem:Restriction} will be a consequence of the following
claim:
\begin{claim}
\label{calim}For all $q>4$, the following inequality holds true

\begin{multline}
\int_{M}^{2M}\rho\left(\sum_{j}\int_{-\infty}^{\infty}\left|b_{j}\left(z\right)\right|^{2}\left|J_{\nu_{j}}\left(\rho g\left(z\right)\right)\right|^{2}dz\right)^{\frac{q}{2}}d\rho\\
\lesssim M^{\frac{4-q}{2}}\left(\int_{-\infty}^{\infty}\sum_{j}\left|b_{j}\left(z\right)\right|^{2}dz\right)^{\frac{q}{2}}.\label{eq:dimension2}
\end{multline}

\end{claim}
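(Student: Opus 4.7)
My plan is to estimate
\[
F(\rho) := \sum_{j} \int_{-\infty}^{\infty} |b_j(z)|^2 \bigl|J_{\nu_j}(\rho g(z))\bigr|^2\, dz
\]
by splitting the $j$-sum, at each fixed $(\rho,z)$, into three regimes according to how $\nu_j$ compares with the argument $\rho g(z)$. Write $F = F_{\mathrm{I}} + F_{\mathrm{II}} + F_{\mathrm{III}}$, where $F_{\mathrm{I}}$ collects the indices with $2\nu_j \leq \rho g(z)$ (large argument), $F_{\mathrm{III}}$ those with $\nu_j \geq 2\rho g(z)$ (small argument), and $F_{\mathrm{II}}$ the transition indices $\rho g(z)/2 \leq \nu_j \leq 2\rho g(z)$.

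For $F_{\mathrm{I}}$, item (1) of Lemma~\ref{lem:Decay} yields $|J_{\nu_j}(\rho g(z))|^2 \leq 1/(\rho g(z)) \lesssim M^{-1}$, so pointwise $F_{\mathrm{I}}(\rho) \lesssim M^{-1} \sum_j \|b_j\|_{L^2}^2$; integrating,
\[
\int_M^{2M} \rho\, F_{\mathrm{I}}(\rho)^{q/2}\, d\rho \lesssim M^{2}\,M^{-q/2}\Bigl(\sum_j\|b_j\|_{L^2}^2\Bigr)^{q/2} = M^{(4-q)/2}\Bigl(\sum_j\|b_j\|_{L^2}^2\Bigr)^{q/2},
\]
which is the desired bound. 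For $F_{\mathrm{III}}$, item (2) gives the stronger pointwise estimate $|J_{\nu_j}|^2 \leq \nu_j^{-2} \lesssim M^{-2}$, so its contribution is subdominant.

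The delicate term, and the main obstacle, is $F_{\mathrm{II}}$: items (3)--(4) yield only the weaker pointwise bound $|J_{\nu_j}(\rho g(z))| \lesssim \nu_j^{-1/3} \sim M^{-1/3}$, which by itself is too weak to close the argument. To compensate one must exploit the narrowness of the Airy window around $\rho = \nu_j/g(z)$, which has width only $\sim M^{1/3}$ in $\rho$. Since $q/2 > 1$, Minkowski's integral inequality lets us push the $z$-integration and the sum over transition indices outside the $L^{q/2}(\rho\,d\rho,[M,2M])$-norm, reducing matters to controlling, for each fixed $z$ and each transition-range $\nu_j$,
\[
\left(\int_M^{2M} \rho\, |J_{\nu_j}(\rho g(z))|^q\, d\rho\right)^{\!2/q}.
\]
After the substitution $u = \rho g(z)$, split the $u$-integration into the central peak $|u-\nu_j|\leq \nu_j^{1/3}$ (controlled by the uniform bound $\nu_j^{-1/3}$) and the two Airy tails $\nu_j^{1/3}\leq |u-\nu_j| \lesssim \nu_j$ (controlled by $|u-\nu_j|^{-1/4}\,\nu_j^{-1/3}$ from items (3) and (4)). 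The tail integrand $(u-\nu_j)^{-q/4}$ is integrable at its lower limit precisely because $q>4$, which is where the hypothesis on $q$ enters decisively.

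Combining the three pieces via the triangle inequality in $L^{q/2}$ yields Claim~\ref{calim}, and the subsequent dyadic sum $\sum_{m\geq 0} 2^{m(4-q)/2}$ converges since $q>4$. I expect the careful balancing inside the transition region--playing the weak peak decay $M^{-1/3}$ off against the $M^{1/3}$ window width and the $q>4$ Airy integrability--to be the main technical obstacle; it is also the place where the strict inequality $q>4$ (rather than the endpoint $q=4$) is actually used.
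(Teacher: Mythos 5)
Your decomposition of the $j$-sum into the three regimes $2\nu_j\le\rho g(z)$, $\nu_j\ge 2\rho g(z)$, and the transition range is exactly the paper's splitting into $I_M^{0}$, $I_M^{\infty}$ and $I_M^{c}$, and your treatment of the two easy regimes coincides with theirs (for $I_M^{\infty}$ the paper converts $\nu_j^{-2}$ into $(\rho g(z))^{-1}$ rather than keeping $M^{-2}$, but both close). The real difference is in the transition term. The paper keeps the sum over transition indices inside the $L^{q/2}(\rho\,d\rho)$ norm, tiles $[M,2M]$ by windows $G_\alpha$ of width $M^{1/3}g(z)^{-2/3}$, groups the frequencies into blocks $A_\beta=\sum_{\nu_j\in G_\beta}|b_j(z)|^2$, and bounds the resulting discrete convolution against the Airy kernel $(|\alpha-\beta|+1)^{-1/2}$ by Young's inequality; you instead apply the triangle inequality in $L^{q/2}$ at once and compute $\left(\int_M^{2M}\rho\,|J_{\nu}(\rho g(z))|^{q}\,d\rho\right)^{2/q}$ for a single Bessel function directly from items (3)--(4) of Lemma \ref{lem:Decay}. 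Since Young's inequality with an $\ell^1$ density and the triangle inequality over translates of a fixed profile are the same estimate, the two routes are equivalent, your version is if anything cleaner, and the hypothesis $q>4$ enters in both at the same point, namely the summability of the Airy tail $|u-\nu|^{-q/4}$. One caveat you should flag explicitly: the transition piece yields only $M^{(4-q)/3}$, not $M^{(4-q)/2}$ --- the $M^{1/3}$ window width played against the $M^{-1/3}$ peak decay cannot do better, and since $q>4$ this is the \emph{weaker} exponent. This is not a defect of your argument relative to the paper's: the paper's own estimate for $I_M^{c,1}$ is also $M^{(4-q)/3}$, so the claim as stated is not literally what either proof delivers; the weaker exponent still makes the dyadic sum in the proof of Lemma \ref{lem:Restriction} converge for $q>\frac{2n}{n-1}$, but your closing sentence asserting the claim with $M^{(4-q)/2}$ should be amended accordingly.
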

Indeed, if $q>4$ we need only to note that 
\begin{multline*}
\int_{M}^{2M}\rho^{-q\frac{n-2}{2}+n-1}\left(\sum_{j}\int_{-\infty}^{\infty}\left|b_{j}\left(z\right)\right|^{2}\left|J_{\nu_{j}}\left(\rho g\left(z\right)\right)\right|^{2}dz\right)^{\frac{q}{2}}d\rho\\
\lesssim M^{\left(n-2\right)\left(-\frac{q}{2}+1\right)}\int_{M}^{2M}\rho\left(\sum_{j}\int_{-\infty}^{\infty}\left|b_{j}\left(z\right)\right|^{2}\left|J_{\nu_{j}}\left(\rho g\left(z\right)\right)\right|^{2}dz\right)^{\frac{q}{2}}d\rho,
\end{multline*}
invoke our claim and sum over all dyadic intervals in (\ref{eq:restriction dyadic}):
\begin{multline}
\sum_{m}\int_{2^{m}}^{2^{m+1}}\rho^{-q\frac{n-2}{2}+n-1}\left(\sum_{j}\int_{-\infty}^{\infty}\left|b_{j}\left(z\right)\right|^{2}\left|J_{\nu_{j}}\left(\rho g\left(z\right)\right)\right|^{2}dz\right)^{\frac{q}{2}}d\rho\\
\lesssim\sum_{m}2^{m\left(n-2\right)\left(-\frac{q}{2}+1\right)+m\frac{4-q}{2}}\left(\int_{-\infty}^{\infty}\sum_{j}\left|b_{j}\left(z\right)\right|^{2}dz\right)^{\frac{q}{2}}.
\end{multline}
It is then a simple matter to check that the exponent is negative
for $q>\frac{2n}{n-1}$.

If the exponent $q$ is however smaller, $\frac{2n}{n-1}<q\leq4$,
we need to use an extra trick. Note that equation (\ref{eq:dimension2})
implies 
\[
\int_{M}^{2M}\left(\sum_{j}\int_{-\infty}^{\infty}\left|b_{j}\left(z\right)\right|^{2}\left|J_{\nu_{j}}\left(\rho g\left(z\right)\right)\right|^{2}dz\right)^{\frac{q_{1}}{2}}d\rho\lesssim M^{1-\frac{q_{1}}{2}}\left(\int_{-\infty}^{\infty}\sum_{j}\left|b_{j}\left(z\right)\right|^{2}dz\right)^{\frac{q_{1}}{2}},
\]
for all $q_{1}>4$. Then using Hölder's inequality and the previous
inequality, 
\begin{multline*}
\int_{M}^{2M}\left(\sum_{j}\int_{-\infty}^{\infty}\left|b_{j}\left(z\right)\right|^{2}\left|J_{\nu_{j}}\left(\rho g\left(z\right)\right)\right|^{2}dz\right)^{\frac{q}{2}}d\rho\\
\lesssim M^{1-\frac{q}{q_{1}}}\left(\int_{M}^{2M}\left(\sum_{j}\int_{-\infty}^{\infty}\left|b_{j}\left(z\right)\right|^{2}\left|J_{\nu_{j}}\left(\rho g\left(z\right)\right)\right|^{2}dz\right)^{\frac{q_{1}}{2}}d\rho\right)^{\frac{q}{q_{1}}}.
\end{multline*}
Therefore, summing over all intervals, we obtain 
\begin{multline*}
\sum_{m}\int_{2^{m}}^{2^{m+1}}\rho^{-q\frac{n-2}{2}+n-1}\left(\sum_{j}\int_{-\infty}^{\infty}\left|b_{j}\left(z\right)\right|^{2}\left|J_{\nu_{j}}\left(\rho g\left(z\right)\right)\right|^{2}dz\right)^{\frac{q}{2}}d\rho\\
\lesssim\sum_{m}2^{m\left\{ -q\frac{n-2}{2}+n-1+1-\frac{q}{2}\right\} }\left(\int_{-\infty}^{\infty}\sum_{j}\left|b_{j}\left(z\right)\right|^{2}dz\right)^{\frac{q}{2}},
\end{multline*}
where the exponent $-q\frac{n-1}{2}+n$ is negative for all $q>\frac{2n}{n-1}$
.

To prove Claim \ref{calim} let us split each dyadic integrand in
(\ref{eq:restriction dyadic}) in three parts corresponding to the
differnt ranges of control of Bessel functions.
\begin{align*}
 & \int_{M}^{2M}\rho\left(\int_{-\infty}^{\infty}\sum_{\nu_{j}\in I^{0}}\left|b_{j}\left(z\right)\right|^{2}\left|J_{\nu_{j}}\left(\rho g\left(z\right)\right)\right|^{2}dz\right)^{\frac{q}{2}}d\rho\\
 & +\int_{M}^{2M}\rho\left(\int_{-\infty}^{\infty}\sum_{\nu_{j}\in I^{c}}\left|b_{j}\left(z\right)\right|^{2}\left|J_{\nu_{j}}\left(\rho g\left(z\right)\right)\right|^{2}dz\right)^{\frac{q}{2}}d\rho\\
 & +\int_{M}^{2M}\rho\left(\int_{-\infty}^{\infty}\sum_{\nu_{j}\in I^{\infty}}\left|b_{j}\left(z\right)\right|^{2}\left|J_{\nu_{j}}\left(\rho g\left(z\right)\right)\right|^{2}dz\right)^{\frac{q}{2}}d\rho\\
 & =\sum_{M}\left(I_{M}^{0}+I_{M}^{c}+I_{M}^{\infty}\right),
\end{align*}
where $I_{M}^{0}=\left[0,Mg\left(z\right)/2\right)$, $I_{M}^{c}=\left[Mg\left(z\right)/2,4Mg\left(z\right)\right)$,
and $I_{M}^{\infty}=\left[4Mg\left(z\right),\infty\right)$.

Recall that if $2k<r$, $\left|J_{k}\left(r\right)\right|\leq r^{-1/2}$;
in $I_{M}^{0}$ we have $2\nu_{j}<Mg\left(z\right)<\rho g\left(z\right)$,
hence
\begin{eqnarray}
I_{M}^{0} & \leq A^{-\frac{q}{2}} & \int_{M}^{2M}\rho^{1-\frac{q}{2}}\left(\int_{-\infty}^{\infty}\sum_{\nu_{j}\in I^{0}}\left|b_{j}\left(z\right)\right|^{2}dz\right)^{\frac{q}{2}}d\rho\nonumber \\
 & \leq A^{-\frac{q}{2}} & M^{\frac{4-q}{2}}\left(\int_{-\infty}^{\infty}\sum_{\nu_{j}}\left|b_{j}\left(z\right)\right|^{2}dz\right)^{\frac{q}{2}}.
\end{eqnarray}

Similarly, $I_{M}^{\infty}$ is also easily bounded as if $k>2r$,
$\left|J_{k}\left(r\right)\right|\leq k^{-1}$, and in $I_{M}^{\infty}$,
$k>4Mg\left(z\right)>2\rho g\left(z\right)$. Furthermore, since $\rho g\left(z\right)>1$,
$\left(\rho g\left(z\right)\right)^{-2}<\left(\rho g\left(z\right)\right)^{-1}$
and, in $I_{M}^{\infty}$, we have $\left|J_{k}\left(\rho g\left(z\right)\right)\right|^{2}\leq\left(\rho g\left(z\right)\right)^{-1}$.
This shows that again 
\begin{equation}
I_{M}^{\infty}\leq A^{-\frac{q}{2}}M^{\frac{4-q}{2}}\left(\int_{-\infty}^{\infty}\sum_{\nu_{j}}\left|b_{j}\left(z\right)\right|^{2}dz\right)^{\frac{q}{2}}.
\end{equation}

Finally, we need to work a little bit harder than in the previous
cases to obtain a suitable estimate for $I_{M}^{c}$. First of all
note that Minkowski's inequality yields
\begin{equation}
I_{M}^{c}\leq\left[\int_{-\infty}^{\infty}\left\{ \int_{M}^{2M}\rho\left(\sum_{\nu_{j}\in I^{c}}\left|b_{j}\left(z\right)\right|^{2}\left|J_{\nu_{j}}\left(\rho g\left(z\right)\right)\right|^{2}\right)^{\frac{q}{2}}d\rho\right\} ^{\frac{2}{q}}dz\right]^{\frac{q}{2}}.\label{eq:Minkowski}
\end{equation}
In $I_{M}^{c}$ we want to use estimate (3) of Lemma \ref{lem:Decay},
we thus need to split the inner integral so that $\rho g\left(z\right)\sim\nu_{j}+\alpha\nu_{j}$
in the according range of $\alpha$. Consider the family of sets 
\[
G_{\alpha}=\left[\frac{M}{2}+\alpha M^{\frac{1}{3}}g\left(z\right)^{-\frac{2}{3}},\frac{M}{2}+\left(\alpha+1\right)M^{\frac{1}{3}}g\left(z\right)^{-\frac{2}{3}}\right],
\]
for $\alpha=0,1,2,\ldots,\left[\left(Mg\left(z\right)\right)^{\frac{2}{3}}\right]$,
so that $\bigcup G_{\alpha}\supseteq\left[M,2M\right]$ and in each
interval $\rho g\left(z\right)\sim\nu_{j}+\alpha\nu_{j}^{\frac{1}{3}}$,
and split (\ref{eq:Minkowski}) in the following way 
\[
I_{M}^{c}\lesssim\left[\int_{-\infty}^{\infty}\left\{ \sum_{\alpha}\int_{G_{\alpha}}\rho\left(\sum_{\nu_{j}\in I^{c}}\left|b_{j}\left(z\right)\right|^{2}\left|J_{\nu_{j}}\left(\rho g\left(z\right)\right)\right|^{2}\right)^{\frac{q}{2}}d\rho\right\} ^{\frac{2}{q}}dz\right]^{\frac{q}{2}},
\]

Let us also define 
\[
A_{\beta}=\sum_{\nu_{j}\in G_{\beta}}\left|b_{j}\left(z\right)\right|^{2}.
\]
We can then invoke Lemma \ref{lem:Decay} and rearragne the sums to
bound $I_{M}^{c}$ by 
\begin{multline*}
\left[\int_{-\infty}^{\infty}\left\{ \sum_{\alpha}\int_{G_{\alpha}}\left(\sum_{\beta\leq\alpha}A_{\beta}\frac{1}{\left(\left|\alpha-\beta\right|+1\right)^{1/2}M^{\frac{2}{3}}g\left(z\right)^{-\frac{4}{3}}}\right)^{\frac{q}{2}}\rho d\rho\right\} ^{\frac{2}{q}}dz\right]^{\frac{q}{2}}\\
+\left[\int_{-\infty}^{\infty}\left\{ \sum_{\alpha}\int_{G_{\alpha}}\left(\sum_{\beta\geq\alpha}A_{\beta}\frac{1}{\left(\left|\alpha-\beta\right|+1\right)^{2}M^{\frac{2}{3}}g\left(z\right)^{-\frac{4}{3}}}\right)^{\frac{q}{2}}\rho d\rho\right\} ^{\frac{2}{q}}dz\right]^{\frac{q}{2}}.
\end{multline*}
Note that the second sum is easier to control than the first. We shall,
therefore, focus on the first term, $I_{M,}^{c,1}$. Since the intervals
$G_{\alpha}$ have length $M^{\frac{1}{3}}g\left(z\right)^{-\frac{2}{3}}$,
\begin{eqnarray*}
I_{M}^{c,1} & \lesssim & M^{\frac{4-q}{3}}A^{2\frac{\left(q-1\right)}{3}}\left[\int_{-\infty}^{\infty}\left\{ \sum_{\alpha}\left(\sum_{\beta\geq\alpha}A_{\beta}\frac{1}{\left(\left|\alpha-\beta\right|+1\right)^{2}}\right)^{\frac{q}{2}}\right\} ^{\frac{2}{q}}dz\right]^{\frac{q}{2}}.
\end{eqnarray*}
Furthermore, using Young's inequality, since $q>4$, taking $2/q=1/s-1/2$
we obtain 
\begin{eqnarray*}
\sum_{\alpha}\left(\sum_{\beta\geq\alpha}A_{\beta}\frac{1}{\left(\left|\alpha-\beta\right|+1\right)^{2}}\right)^{\frac{q}{2}} & \lesssim & \left(\sum_{\gamma}A_{\gamma}^{s}\right)^{\frac{q}{2s}}\\
 & \lesssim & \left(\sum_{\gamma}A_{\gamma}\right)^{\frac{q}{2}}.
\end{eqnarray*}
We have thus ahowed that the central integrand $I_{M}^{c}$ can also
be bounded in the desired way; 
\begin{equation}
I_{M}^{c,1}\lesssim A^{2\left(\frac{q-1}{3}\right)}M^{\frac{4-q}{3}}\left[\int_{-\infty}^{\infty}\sum_{k\in I_{M}^{c}}\left|a_{k}\right|^{2}dz\right]^{\frac{q}{2}}.
\end{equation}
q.e.d.
\end{proof}

\section{Generalized Disc Multiplier}

In the late 80's it was proved independently in \cite{key-8,key-9}
that the disc multiplier operator is bounded in the mixed norm spaces
$L_{rad}^{p}L_{ang}^{2}\left(\mathbb{R}^{n}\right)$ for all $\frac{2n}{n+1}<p<\frac{2n}{n-1}$.
Let us here explore further the theory of radial fourier multipliers
following the ideas presented in the aforementioned articles. 

Let $m$ be a radial function and consider the fourier multiplier
\[
\left(T_{m}f\right)\hat{}\left(\xi\right)=m\left(\left|\xi\right|\right)\hat{f}\left(\xi\right).
\]
Once again, recall the expansion of a given function $f$ in terms
of its spherical harmonics, 
\[
f\left(x\right)=\sum_{k=0}^{\infty}\sum_{j=1}^{d\left(k\right)}f_{k,j}\left(\left|x\right|\right)Y_{k}^{j}\left(\frac{x}{\left|x\right|}\right).
\]
Then, the classical formula relating the Fourier transform and the
spherical harmonics expansion, \cite{key-11-1}, yields 
\[
\hat{f}\left(\xi\right)=\sum_{k=0}^{\infty}\sum_{j=1}^{d\left(k\right)}Y_{k}^{j}\left(\frac{\xi}{\left|\xi\right|}\right)2\pi i^{k}\left|\xi\right|^{-\left(k+\frac{n-2}{2}\right)}\int_{0}^{\infty}f_{k,j}\left(t\right)J_{k+\frac{n-2}{2}}\left(2\pi\left|\xi\right|t\right)t^{k+\frac{n}{2}}dt.
\]
The expression of $T_{m}$ in terms of its spherical harmonics expansion
is then 
\begin{multline*}
T_{m}f\left(x\right)=\sum_{k=0}^{\infty}\sum_{j=1}^{d\left(k\right)}2\pi i^{k}\int_{\mathbb{R}^{n}}e^{2\pi ix\xi}m\left(\left|\xi\right|\right)Y_{k}^{j}\left(\frac{\xi}{\left|\xi\right|}\right)\left|\xi\right|^{-\left(k+\frac{n-2}{2}\right)}\\
\int_{0}^{\infty}f_{k,j}\left(t\right)J_{k+\frac{n-2}{2}}\left(2\pi\left|\xi\right|t\right)t^{k+\frac{n-2}{2}}dtd\xi.
\end{multline*}
Exchanging the order of integration, the previous expression becomes
\[
\sum_{k=0}^{\infty}\sum_{j=1}^{d\left(k\right)}2\pi i^{k}\int_{0}^{\infty}f_{k,j}\left(t\right)t^{k+\frac{n-2}{2}}\hat{g_{t}}\left(x\right)dx,
\]
where 
\[
g_{t}\left(\xi\right)=m\left(\left|\xi\right|\right)J_{k+\frac{n-2}{2}}\left(2\pi\left|\xi\right|t\right)\left|\xi\right|^{-\left(k+\frac{n-2}{2}\right)}Y_{k}^{j}\left(\frac{\xi}{\left|\xi\right|}\right).
\]
Therefore, computing once more the Fourier transform of a radial function,
\[
T_{m}f\left(r\theta\right)=\sum_{k=0}^{\infty}\sum_{j=1}^{d\left(k\right)}4\pi^{2}\left(-1\right)^{k}Y_{k}^{j}\left(\theta\right)T_{m}^{k,j}f\left(r\right),
\]
with 
\[
T_{m}^{k,j}f\left(r\right)=\int_{0}^{\infty}f_{k,j}\left(t\right)t^{\frac{n+2k-1}{2}}r^{-\frac{n+2k-1}{2}}K_{k+\frac{n-2}{2}}\left(t,r\right)dt,
\]
where 
\[
K_{\nu}\left(t,r\right)=\sqrt{rt}\int_{a}^{b}m\left(s\right)J_{\nu}\left(2\pi ts\right)J_{\nu}\left(2\pi rs\right)sds.
\]
In order to simplify the notation, note that
\begin{equation}
T_{m}f\left(r\theta\right)\approx\sum_{k=0}^{\infty}\sum_{j=1}^{d\left(k\right)}Y_{k}^{j}\left(\theta\right)T_{m}^{k,j}f\left(r\right)
\end{equation}
with $T_{m}^{k,j}$ defined as before, but 
\[
K_{\nu}\left(t,r\right)=\sqrt{rt}\int_{a}^{b}m\left(s\right)J_{\nu}\left(ts\right)J_{\nu}\left(rs\right)sds.
\]

Let us take a closer look at the kernel of the operator $K_{\alpha}$,
\begin{equation}
K_{\alpha}\left(t,r\right)=\sqrt{rt}\int_{a}^{b}m\left(s\right)J_{\alpha}\left(ts\right)J_{\alpha}\left(rs\right)sds.\label{eq:kernel}
\end{equation}
It is suitable to decode these kernels in terms of an auxiliary function
$\mathcal{U}_{r}\left(s\right)=\sqrt{rs}J_{\text{\ensuremath{\alpha}}}\left(rs\right)$.
The use of Bessel's equation yields 
\[
\frac{\partial}{\partial s}\left\{ \mathcal{U}_{r}\left(s\right)\mathcal{U}_{t}'\left(s\right)-\mathcal{U}_{t}\left(s\right)\mathcal{U}_{r}'\left(s\right)\right\} =\left(t^{2}-r^{2}\right)\sqrt{tr}J_{\alpha}\left(rs\right)J_{\alpha}\left(ts\right)s.
\]
Therefore, after an integration by parts in (\ref{eq:kernel}), we
obtain 
\begin{eqnarray*}
K_{\alpha}\left(t,r\right) & = & \left[m\left(s\right)\frac{1}{t^{2}-r^{2}}\left\{ \mathcal{U}_{r}\left(s\right)\mathcal{U}_{t}'\left(s\right)-\mathcal{U}_{t}\left(s\right)\mathcal{U}_{r}'\left(s\right)\right\} \right]_{a}^{b}\\
 &  & -\int_{a}^{b}m'\left(s\right)\frac{1}{t^{2}-r^{2}}\left\{ \mathcal{U}_{r}\left(s\right)\mathcal{U}_{t}'\left(s\right)-\mathcal{U}_{t}\left(s\right)\mathcal{U}_{r}'\left(s\right)\right\} ds.
\end{eqnarray*}
Hence, we express the modified disc multiplier in the following way
\begin{multline}
T_{m}f\left(r\theta\right)=\sum_{k,j}Y_{k}^{j}\left(\theta\right)\int_{0}^{\infty}f_{k,j}\left(t\right)t^{\frac{n+2k-1}{2}}r^{-\frac{n+2k-1}{2}}\cdot\\
\cdot\left(m\left(b\right)k\left(r,t,b\right)-m\left(a\right)k\left(r,t,a\right)-\int_{a}^{b}m'\left(s\right)k\left(r,t,s\right)ds\right)dt,\label{eq:decomposition}
\end{multline}
where $k_{\alpha}\left(t,r,s\right)$ denotes the kernel $\frac{1}{t^{2}-r^{2}}\left\{ \mathcal{U}_{r}\left(s\right)\mathcal{U}_{t}'\left(s\right)-\mathcal{U}_{t}\left(s\right)\mathcal{U}_{r}'\left(s\right)\right\} $.
A simple expansion of $k_{\alpha}$ reveals the underlying singularities
of the operator $K_{\alpha}$; 
\begin{eqnarray}
k_{\alpha}\left(t,r,s\right) & = & \bigg(s\frac{\sqrt{t}J_{\text{\ensuremath{\alpha}}}'\left(ts\right)J_{\alpha}\left(rs\right)\sqrt{r}}{2\left(t-r\right)}+s\frac{\sqrt{t}J_{\text{\ensuremath{\alpha}}}'\left(ts\right)J_{\alpha}\left(rs\right)\sqrt{r}}{2\left(t+r\right)}\nonumber \\
 &  & +s\frac{\sqrt{t}J_{\text{\ensuremath{\alpha}}}\left(ts\right)J_{\alpha}'\left(rs\right)\sqrt{r}}{2\left(r-t\right)}+s\frac{\sqrt{t}J_{\text{\ensuremath{\alpha}}}'\left(ts\right)J_{\alpha}\left(rs\right)\sqrt{r}}{2\left(t+r\right)}\Bigg).\label{eq:core}
\end{eqnarray}
A thorough study of the kernel $k_{\alpha}\left(r,t,1\right)$ was
carried out in \cite{key-8} using the decay properties of Bessel
functions (Lemma (\ref{lem:Decay})) in order to show that the disc
multiplier is bounded in the mixed norm spaces $L_{rad}^{p}L_{ang}^{2}\left(\mathbb{R}^{n}\right)$
in the optimal range $\frac{2n}{n+1}<p<\frac{2n}{n-1}$. 

Although nothing really new has been done, we have brought to light
a more general family of operators underlying the disc multiplier,
that is the family of operators $T^{s}$ defined as 
\begin{equation}
T^{s}f\left(r\theta\right)=\sum_{k,j}Y_{k}^{j}\left(\theta\right)\int_{0}^{\infty}f_{k,j}\left(t\right)t^{\frac{n+2k-1}{2}}r^{-\frac{n+2k-1}{2}}k\left(r,t,s\right)dt.\label{eq:generalized}
\end{equation}
Indeed, any bound on operators $T^{s}$ that is uniform in $s$ implies
a bound on $T_{m}$ for a suitable $m$.
\begin{prop}
Let $f$ be a rapidly decreasing function then, for every $\frac{2n}{n+1}<p<\frac{2n}{n-1}$
\begin{equation}
\left\Vert T^{s}f\right\Vert _{p,2}\leq C_{p,n}\left\Vert f\right\Vert _{p,2},\label{eq:bound}
\end{equation}
where the constant $C_{p,n}$ is uniform in $s$. \end{prop}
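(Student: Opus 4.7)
The plan is to reduce the statement to the single case $s=1$ by a dilation argument and then to invoke the disc-multiplier bound of \cite{key-8,key-9}. The key observation is that the whole $s$-dependence of $k_{\alpha}(r,t,s)$ enters through the scalings $rs$ and $ts$ of the arguments of the Bessel functions, together with an overall factor of $s$ coming from the integration by parts; so after a change of variables $u=ts$, $v=rs$, the operator $T^{s}$ becomes nothing more than a rescaled copy of $T^{1}$, and the resulting powers of $s$ cancel cleanly in the mixed-norm.

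First I would record the scaling law of the kernel. Writing out (\ref{eq:core}) and substituting $t\mapsto u/s$, $r\mapsto v/s$, every factor of $s$ inside the Bessel arguments disappears, while each denominator $t\pm r$ picks up a factor of $s$, so one arrives at
\[
k_{\alpha}\!\left(v/s,\,u/s,\,s\right) \;=\; s\cdot k_{\alpha}(v,u,1).
\]
Since the radial weight $t^{(n+2k-1)/2}r^{-(n+2k-1)/2}$ is homogeneous of degree zero under the simultaneous rescaling $(r,t)\mapsto(r/s,t/s)$, changing variables in (\ref{eq:generalized}) and tracking the remaining $1/s$ from $dt$ yields the clean dilation identity
\[
T^{s}f(x) \;=\; \bigl(T^{1}g\bigr)(sx), \qquad g(y):=f(y/s).
\]

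Next I would compare the mixed-norms on both sides. A standard change of variable in the outer radial integral gives
\[
\|T^{s}f\|_{p,2} \;=\; s^{-n/p}\,\|T^{1}g\|_{p,2}, \qquad \|g\|_{p,2} \;=\; s^{n/p}\,\|f\|_{p,2}.
\]
Consequently, once $T^{1}$ is known to satisfy $\|T^{1}g\|_{p,2}\leq C_{p,n}\|g\|_{p,2}$, the two powers of $s$ cancel and one obtains $\|T^{s}f\|_{p,2}\leq C_{p,n}\|f\|_{p,2}$ with a constant entirely independent of $s$. In particular, combining this with $\int_{a}^{b}|m'(s)|ds<\infty$ and Minkowski's inequality in the integral representation (\ref{eq:decomposition}) would recover Theorem \ref{thm:Ball}.

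The remaining, and in fact the main, obstacle is the $L^{p}_{rad}L^{2}_{ang}$ bound for the single operator $T^{1}$ in the range $\frac{2n}{n+1}<p<\frac{2n}{n-1}$. This is precisely the analysis carried out in \cite{key-8,key-9} for the disc multiplier: one takes the decomposition (\ref{eq:core}) at $s=1$, uses the Bessel asymptotics of Lemma \ref{lem:Decay} to exhibit the $(t-r)^{-1}$ singularity as a Hilbert-transform type operator on each spherical harmonic fibre, verifies that the bound on this fibre is \emph{uniform} in the index $\alpha=k+(n-2)/2$, and then sums in $k$ using orthogonality of the $Y_{k}^{j}$ in the $L^{2}_{ang}$ direction; the range of $p$ is dictated by the local integrability of the Bessel kernel in the transition zone $ts\sim\alpha$. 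Since none of these estimates involves the parameter $s$, the scaling identity above transfers them verbatim to every $s>0$, yielding the desired uniform bound.
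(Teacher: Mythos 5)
Your proof is correct and follows essentially the same route as the paper: both arguments rescale via $t\mapsto t/s$, $r\mapsto r/s$ to exhibit $T^{s}$ as a dilation of the $s=1$ operator, observe that the mixed-norm powers $s^{\pm n/p}$ cancel, and then quote the disc-multiplier analysis of \cite{key-8,key-9} for the resulting $s$-free kernel. The only cosmetic difference is that you package the change of variables as the identity $T^{s}f(x)=(T^{1}g)(sx)$ with $g(y)=f(y/s)$, whereas the paper performs the substitution directly inside the explicit norm expression.
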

\begin{proof}
In order to simplify the expressions we will just write one of the
four core kernels of $k_{\alpha}$ apparent in (\ref{eq:core}), that
is 

\begin{equation}
T^{s}f\left(r\theta\right)\sim\sum_{k,j}Y_{k}^{j}\left(\theta\right)\int_{0}^{\infty}f_{k,j}\left(t\right)t^{\frac{n+2k-1}{2}}r^{-\frac{n+2k-1}{2}}s\frac{\sqrt{t}J_{\text{\ensuremath{\alpha}}}'\left(ts\right)J_{\alpha}\left(rs\right)\sqrt{r}}{2\left(t-r\right)}dt,
\end{equation}
for any fixed $s\in\mathbb{R}$. The orthonormality in $L^{2}\left(\mathbb{S}^{n-1}\right)$
of spherical harmonics can now be used in our advantage to complute
the $L_{rad}^{p}L_{ang}^{2}$ norm of $T^{s}$. Indeed, $\left\Vert T^{s}f\right\Vert _{p,2,}$is
up to the notation reduction equal to 
\begin{multline*}
\left(\int_{0}^{\infty}r^{n-1}\left\{ \sum_{k,j}\left|\int_{0}^{\infty}f_{k,j}\left(t\right)t^{\frac{n+2k-1}{2}}r^{-\frac{n+2k-1}{2}}s\frac{\sqrt{t}J_{\text{\ensuremath{\alpha}}}'\left(ts\right)J_{\alpha}\left(rs\right)\sqrt{r}}{2\left(t-r\right)}dt\right|^{2}\right\} ^{\frac{p}{2}}dr\right)^{\frac{1}{p}}.
\end{multline*}
Two simple changes of variables, $t'=st$ and $r'=sr$, yield
\[
s^{-\frac{n}{p}}\left(\int_{0}^{\infty}r^{n-1}\left\{ \sum_{k,j}\left|\int_{0}^{\infty}f_{k,j}\left(\frac{t}{s}\right)t^{\frac{n+2k-1}{2}}r^{-\frac{n+2k-1}{2}}\frac{\sqrt{t}J_{\text{\ensuremath{\alpha}}}'\left(t\right)J_{\alpha}\left(r\right)\sqrt{r}}{2\left(t-r\right)}dt\right|^{2}\right\} ^{\frac{p}{2}}dr\right)^{\frac{1}{p}}.
\]
Note that this expression corresponds to that of the disc multiplier
$T_{0}$ analyzed by in \cite{key-8}. We can therefore bound it by
\[
C_{p,n}s^{-\frac{n}{p}}\left(\int_{0}^{\infty}r^{n-1}\left\{ \sum_{k,j}\left|f_{k,j}\left(\frac{r}{s}\right)\right|^{2}\right\} ^{\frac{p}{2}}dr\right)^{\frac{1}{p}},
\]
for every $\frac{2n}{n+1}<p<\frac{2n}{n-1}$. One last change of variables
produces the estimate 
\[
\left\Vert T^{s}f\right\Vert _{p,2}\leq C\left\Vert f\right\Vert _{p,2},
\]
where $C$ is uniform on $s$.
\end{proof}
It is now a simple matter to produce a bound for the operator $T_{m}$.
\begin{equation}
\left\Vert T_{m}f\right\Vert _{p,2}\lesssim\left|m\left(b\right)\right|\left\Vert T^{b}f\right\Vert _{p,2}+\left|m\left(a\right)\right|\left\Vert T^{b}f\right\Vert _{p,2}+\int_{a}^{b}\left|m'\left(s\right)\right|\left\Vert T^{s}f\right\Vert _{p,2}ds,\label{eq:Subordination}
\end{equation}
and Theorem \ref{thm:Ball} follows from the uniformity in the bound
(\ref{eq:bound}). That is 
\[
\left\Vert T_{m}f\right\Vert _{p,2}\leq C\left(\sup_{s\in\left[a,b\right]}\left|m\left(s\right)\right|+\int_{a}^{b}\left|m'\left(s\right)\right|ds\right)\left\Vert f\right\Vert _{p,2}.
\]

\end{document}